\documentclass{article}
\usepackage{arxiv}
\usepackage[utf8]{inputenc} 
\usepackage[T1]{fontenc}    
\usepackage{hyperref}       
\usepackage{url}            
\usepackage{booktabs}       
\usepackage{amsfonts}       
\usepackage{nicefrac}       
\usepackage{microtype}      
\usepackage{lipsum}		
\usepackage{graphicx}
\usepackage{mathrsfs}
\usepackage{doi}
\usepackage{amsmath,amsthm}
\newtheorem{theorem}{Theorem}
\usepackage{algorithm}
\usepackage{algorithmicx}
\usepackage[noend]{algpseudocode}
\usepackage{caption,subcaption}
\allowdisplaybreaks
\algnewcommand{\LeftComment}[1]{\Statex \(\triangleright\) #1}
\sloppy
\usepackage{stmaryrd}
\usepackage{color,soul}
\usepackage{wrapfig}
\usepackage{cite}
\usepackage{cancel}
\hypersetup{
    colorlinks=true,
    linkcolor=blue,
    filecolor=magenta,      
    urlcolor=cyan,
}
\usepackage{courier}

\usepackage{listings}
\usepackage{xcolor}
\definecolor{codegreen}{rgb}{0,0.6,0}
\definecolor{codegray}{rgb}{0.5,0.5,0.5}
\definecolor{codepurple}{rgb}{0.58,0,0.82}
\definecolor{backcolour}{rgb}{0.95,0.95,0.92}

\lstdefinestyle{mystyle}{
  backgroundcolor=\color{backcolour},   commentstyle=\color{codegreen},
  keywordstyle=\color{magenta},
  numberstyle=\tiny\color{codegray},
  stringstyle=\color{codepurple},
  basicstyle=\ttfamily\footnotesize,
  breakatwhitespace=false,         
  breaklines=true,                 
  captionpos=b,                    
  keepspaces=true,                 
  numbers=left,                    
  numbersep=5pt,                  
  showspaces=false,                
  showstringspaces=false,
  showtabs=false,                  
  tabsize=2
}

\lstset{style=mystyle}

\title{Design of convergence criterion for fixed stress split iterative scheme
 for small strain anisotropic poroelastoplasticity coupled with single phase
 flow}

\author{
  {
  Saumik Dana}\\
	University of Southern California\\
	Los Angeles, CA 90007 \\
	\texttt{sdana@usc.edu} \\
 		\And
  {
  Mary F Wheeler} \\
	Oden Institute for Computational Engineering and Sciences\\
	University of Texas at Austin\\
	Austin, TX 78712 
}

\date{}





\begin{document}
\maketitle
          \begin{abstract}
               The convergence criterion for the fixed stress split iterative scheme for single phase flow coupled with small strain anisotropic poroelastoplasticity is derived. The analysis is based on studying the equations satisfied by the difference of iterates to show that the iterative scheme is contractive. The contractivity is based on driving a term to as small a value as possible (ideally zero). This condition is rendered as the convergence criterion of the algorithm. 

            \keywords{Staggered solution algorithm \and Anisotropic
            poroelastoplasticity \and Coupled flow and mechanics \and
            Contraction map \and Convergence criterion}
\end{abstract}

\section{Introduction}\label{intro}
\begin{figure}[htb!]
\centering
\includegraphics[scale=0.4]{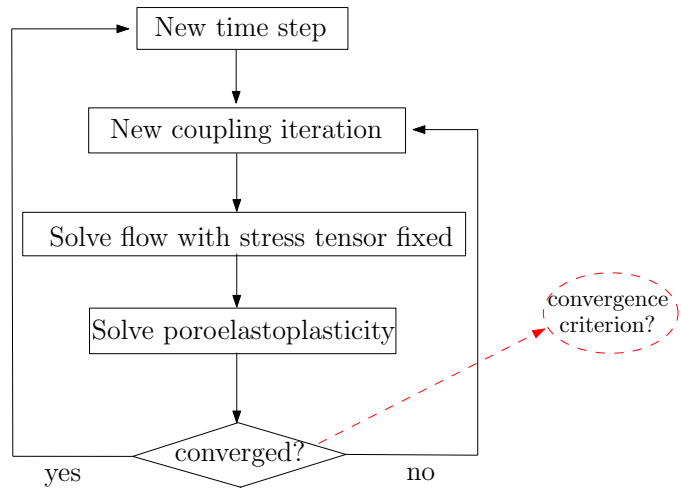}
\caption{Fixed stress split iterative scheme for anisotropic poroelastoplasticity coupled with single phase flow. Our objective is to use the framework of contraction mapping to design a convergence criterion for the algorithm.}
\label{algorithm}
\end{figure}
\par\noindent
With regard to the numerical simulation of multiphysics phenomena and the advent of modular object oriented programming frameworks, individual physical models are written in separate templates in most modern code frameworks (Deal.II \cite{bangerth}, FEniCS \cite{logg}, Feel++ \cite{prud}, FEI (Trilinos) \cite{heroux}, FreeFEM++ \cite{hecht}, libMESH \cite{kirk}, MFEM \cite{anderson}, OOFEM \cite{patzak} and SOFA \cite{faure}). While most of these frameworks are being improved upon on a regular basis, there are specific algorithms designed for specific problems that are developed in-house in research labs. It becomes imperative to collate the features of the open source frameworks with the functionalities of the in-house code developments. In that regard, staggered solution algorithms offer avenues for fast and easy collaboration. These algorithms solve individual physics individually by decoupling the coupled set of partial differential equations and then iterate back and forth in each time step. These algorithms, unlike fully coupled systems, are swift and easy to implement, but they come at the cost of lack of convergence norms. Each staggered solution algorithm needs to be carefully designed with regards to two important aspects: the decoupling constraint and the convergence criterion.   
The coupling of fluid diffusion with deformation in elasto-plastic porous
materials is important from the standpoint of variety of engineering
applications not limited to geomaterials~\cite{inria,cowin}. The
fixed stress split iterative algorithm has been studied for the coupling of
fluid diffusion with deformation in elastic porous material. In this work,
we study the fixed stress split algorithm~\cite{dana-2018,dana2019system,dana2020,dana2020efficient,dana2021,danacg,danacmame,danathesis,JAMMOUL2020,jammoul2019RSC} for coupling fluid diffusion with
deformation in elasto-plastic porous material. We use the framework of a contraction mapping to arrive at the convergence
criterion for the algorithm coupling small strain anisotropic
poroelastoplasticity with single phase flow. The reader is referred to \mbox{\cite{storvik}} and \mbox{\cite{both2017}} for recent work using the framework of contraction mapping. As shown in Figure \ref{algorithm}, the flow subproblem is solved with stress tensor fixed followed by the poromechanics subproblem in every coupling iteration at each time step. The coupling iterations are repeated until convergence and Backward Euler is employed for time marching. The analysis is motivated by the results in our previous works as follows
\begin{itemize}
\item In \cite{danacmame}, a contraction mappping for two-grid staggered algorithm led to closed form expressions for coarse scale moduli in terms of fine scale data. Here, the flow equations were solved on a fine grid and the isotropic poroelasticity equations were solved on a coarse grid. 
\item In \cite{danacg}, a contraction mapping is applied to demonstrate convergence of staggered solution algorithm for anisotropic poroelasticity coupled with single phase flow. The speciality of this algorithm was that the stress tensor is fixed during the flow solve as an extension to the case with isotropic poroelasticity in which the mean stress was fixed during the flow solve.
\end{itemize}
This paper is divided into the following sections.  Model equations with
corresponding variational formulations being considered are introduced in
Section 2. Convergence criterion is derived in Section 3. A summary of
conclusions are presented in Section 4.
\section{Model equations and variational statements}
We present the equations of the linear flow model and the poromechanics model in this section.
\subsection{Linear flow model}\label{flowmodel1}
Let the boundary $\partial \Omega=\Gamma_D^f \cup \Gamma_N^f$ where $\Gamma_D^f$ is the Dirichlet boundary and $\Gamma_N^f$ is the Neumann boundary. The equations are 
\begin{equation}
\label{floweq}
\left.\begin{array}{c}
\frac{\partial \zeta}{\partial t}+\nabla \cdot \mathbf{z}=q\qquad(\mathrm{Mass\,\,conservation})\\
\mathbf{z}=-\frac{\mathbf{K}}{\mu}(\nabla p-\rho_0 \mathbf{g})=-\boldsymbol{\kappa}(\nabla p-\rho_0 \mathbf{g})\qquad(\mathrm{Darcy's\,\,law})\\
\rho=\rho_0(1+c\,p)\\
p=g \,\, \mathrm{on}\,\,\Gamma_D^f \times (0,T],\,\,\mathbf{z}\cdot\mathbf{n}=0 \,\, \mathrm{on}\,\,\Gamma_N^f \times (0,T]\\
p(\mathbf{x},0)=0,\,\,\rho(\mathbf{x},0)=\rho_0(\mathbf{x}),\,\, \phi(\mathbf{x},0)=\phi_0(\mathbf{x})\qquad (\forall \mathbf{x}\in \Omega)
\end{array}\right.
\end{equation}
where $p:\Omega \times (0,T]\rightarrow \mathbb{R}$ is the fluid pressure, $\mathbf{z}:\Omega \times (0,T]\rightarrow \mathbb{R}^3$ is the fluid flux, $\zeta$ is the increment in fluid content\footnote{\cite{biot-energy} defines the increment in fluid content as the measure of the amount of fluid which has flowed in and out of a given element attached to the solid frame}, $q$ is the source or sink term, $\mathbf{K}$ is the uniformly symmetric positive definite absolute permeability tensor, $\mu$ is the fluid viscosity, $\phi$ is the porosity, $\boldsymbol{\kappa}=\frac{\mathbf{K}}{\mu}$ is a measure of the hydraulic conductivity of the pore fluid, $c$ is the fluid compressibility and $T>0$ is the time interval. 
\subsection{Poromechanics model}\label{poromodel1}
The important phenomenological aspects of small strain elastoplasticity are
\begin{itemize}
\item The existence of an elastic domain, i.e. a range of stresses within which the behaviour
of the material can be considered as purely elastic, without evolution of permanent
(plastic) strains. The elastic domain is delimited by the so-called yield stress. A scalar yield function $\Phi(\boldsymbol{\sigma})$ is introduced. 
The yield locus is the boundary of the elastic domain where $\Phi(\boldsymbol{\sigma})=0$ and the corresponding yield surface is defined as $\mathcal{Y}=\{\boldsymbol{\sigma}|\Phi(\boldsymbol{\sigma})=0\}$.
\item If the material is further loaded at the yield stress, then plastic yielding (or plastic flow),
i.e. evolution of plastic strains, takes place. 
\end{itemize}
Let the boundary $\partial \Omega=\Gamma_D^p \cup \Gamma_N^p$ where $\Gamma_D^p$ is the Dirichlet boundary and $\Gamma_N^p$ is the Neumann boundary. The equations are 
\begin{equation}
\label{poromecheq}
\left.\begin{array}{c}
\nabla\cdot \boldsymbol{\sigma}+\mathbf{f}=\mathbf{0}\qquad(\mathrm{Linear\,\,momentum\,\,balance})\\
\mathbf{f}=\rho \phi\mathbf{g} + \rho_r(1-\phi)\mathbf{g}\\
\boldsymbol{\epsilon}^e(\mathbf{u})=\frac{1}{2}(\nabla \mathbf{u} + (\nabla \mathbf{u})^T)\equiv \boldsymbol{\epsilon}(\mathbf{u})-\boldsymbol{\epsilon}^p(\mathbf{u})\qquad(\mathrm{small\,\,strain\,\,elastoplasticity})\\
\boldsymbol{\epsilon}^p=\gamma\frac{\partial \Phi}{\partial \boldsymbol{\sigma}}\qquad(\mathrm{plastic\,\,flow\,\,rule})\\
\boldsymbol{\sigma}=\mathbb{D}\boldsymbol{\epsilon}^e-\boldsymbol{\alpha} p\equiv \mathbb{D}(\boldsymbol{\epsilon}-\gamma\frac{\partial \Phi}{\partial \boldsymbol{\sigma}})-\boldsymbol{\alpha} p\qquad(\mathrm{constitutive\,\,law})\\
\mathbf{u}=\mathbf{0}\,\, \mathrm{on}\,\,\Gamma_D^p \times [0,T],\,\,
\boldsymbol{\sigma}^T\mathbf{n}=\mathbf{t}\,\, \mathrm{on}\,\,\Gamma_N^p \times [0,T]\\
\mathbf{u}(\mathbf{x},0)=\mathbf{0}\qquad (\forall \mathbf{x}\in \Omega)
\end{array}\right.
\end{equation}
where $\mathbf{u}:\Omega \times [0,T]\rightarrow \mathbb{R}^3$ is the solid displacement, $\rho_r$ is the rock density, $\mathbf{f}$ is the body force per unit volume, $\mathbf{t}$ is the traction specified on $\Gamma_N^p$, $\boldsymbol{\epsilon}$ is the strain tensor, $\boldsymbol{\epsilon}^e$ and $\boldsymbol{\epsilon}^p$ are the elastic and plastic parts of strain tensor respctively, $\boldsymbol{\sigma}$ is the Cauchy stress tensor, $\mathbb{D}$ is the fourth order symmetric positive definite anisotropic elasticity tensor, $\boldsymbol{\alpha}$ is the Biot tensor and $\gamma\geq 0$ is the plastic multiplier satisfying the complementarity condition
\begin{equation}
\label{comple}
\left.\begin{array}{c}
\gamma \Phi =0\\
\gamma > 0 \leftrightarrow \Phi=0\\
\gamma=0 \leftrightarrow \Phi<0 
\end{array}\right.
\end{equation}
The inverse of the constitutive law is
\begin{align}
\label{invconstitutive}
&\boldsymbol{\epsilon}^e=\mathbb{D}^{-1}(\boldsymbol{\sigma}+\boldsymbol{\alpha}p)\equiv \mathbb{D}^{-1}\boldsymbol{\sigma}+\frac{C}{3}\mathbf{B}p
\end{align}
where $C(>0)$ is a generalized Hooke's law constant (see \cite{cheng}) and $\mathbf{B}$ is the Skempton pore pressure coefficient (see \cite{skempton-1954}). 
\subsection{Increment in fluid content}
Fluid content is defined as (see \cite{coussy})
\begin{align}
\label{fluidcontent}
\zeta=\frac{1}{M}p+\boldsymbol{\alpha}:\boldsymbol{\epsilon}^e+\phi^p \equiv Cp+\frac{1}{3}C\mathbf{B}:\boldsymbol{\sigma}+\phi^p
\end{align}
where $M(>0)$ is the Biot modulus (see \cite{biot3,cheng}) and $\phi^p$ is a plastic porosity 
given by (see \cite{coussy})
\begin{align}
\label{pporosity}
\phi^p \equiv \boldsymbol{\beta}:\boldsymbol{\epsilon}^p
\end{align}
where $\boldsymbol{\beta}$ is a material parameter.
\subsection{Variational statements and spaces used}
The problem statement is: find $p_h\in W_h$, $\mathbf{z}_h\in \mathbf{V}_h$ and $\mathbf{u}_h\in \mathbf{U}_h$ such that
\begin{align}
\nonumber
&C(p_h^{k,n+1}-p_h^n,\theta_h)_{\Omega}+\Delta t(\nabla \cdot \mathbf{z}_h^{k,n+1},\theta_h)_{\Omega}+(\phi^{p^{k,n+1}}-\phi^{p^n},\theta_h)_{\Omega}\\
\label{one}
&=\Delta t(q^{n+1},\theta_h)_{\Omega}-\frac{C}{3}(\mathbf{B}:(\boldsymbol{\sigma}^{k-1,n+1}-\boldsymbol{\sigma}^n),\theta_h)_{\Omega}\\
\label{two}
&(\boldsymbol{\kappa}^{-1}\mathbf{z}_h^{k,n+1},\mathbf{v}_h)_{\Omega}=-(g,\mathbf{v}_h\cdot \mathbf{n})_{\Gamma_D^f}+(p_h^{k,n+1},\nabla \cdot \mathbf{v}_h)_{\Omega}+(\rho_0 \mathbf{g},\mathbf{v}_h)_{\Omega}
\\
\label{three}
&(\mathbf{t}^{n+1},\mathbf{q}_h)_{\Gamma_N^p} - (\boldsymbol{\sigma}^{k,n+1}:\boldsymbol{\epsilon}^e(\mathbf{q}_h))_{\Omega} + (\mathbf{f}^{n+1},\mathbf{q}_h)_{\Omega}=0
\end{align}
where the superscript $k$ refers to the coupling iteration count and superscript $n$ refers to time step number and the mixed finite element space \mbox{$W_h \times \mathbf{V}_h$} and conforming Galerkin space \mbox{$\mathbf{U}_h$} are given by
\begin{align*}
&W_h= \big\{\theta_h:\theta_h\vert_E\in P_0(E)\,\,\forall E\in \mathscr{T}_h\big\}\\
&\mathbf{V}_h=\big\{\mathbf{v}_h:\mathbf{v}_h\vert_E\leftrightarrow \hat{\mathbf{v}}\vert_{\hat{E}}\in \hat{\mathbf{V}}(\hat{E})\,\,\forall E\in \mathscr{T}_h,\,\,\mathbf{v}_h \cdot \mathbf{n}=0\,\,\mathrm{on}\,\,\Gamma_N^f\big\}\\
&\mathbf{U}_h=\big\{\mathbf{q}_h:\mathbf{q}_h\vert_E\in Q_1(E)\,\,\forall E\in \mathscr{T}_h,\,\,\mathbf{q}_h=\mathbf{0}\,\,\mathrm{on}\,\,\Gamma_D^p\big\}
\end{align*}   
where $P_0$ represents the space of constants, $Q_1$ represents the space of trilinears and the details of $\hat{\mathbf{V}}(\hat{E})$ are given in \cite{danacg}.
The details of \eqref{one} and \eqref{three} are given in Appendix \ref{discretestate}.
\section{Convergence criterion}
To arrive at the convergence criterion, we first arrive at the statement of convergence. We then proceed to simply the convergence criterion by expressing it in terms of computed quantities over every iteration.
\subsection{Statement of convergence}
\begin{figure}[htb!]
\centering
\includegraphics[scale=0.4]{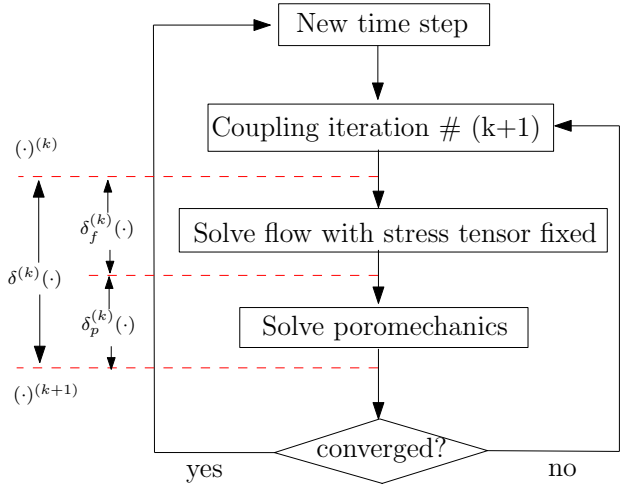}
\caption{The contraction mapping is in terms of quantities $\delta(\cdot)^{(k)}$ and $\delta(\cdot)^{(k)}_p$.}
\label{algorithmep}
\end{figure}
We follow a procedure similar to that in \mbox{\cite{danacg}} to arrive at the statement of convergence. As elucidated in Figure \ref{algorithmep}, we use the notations $\delta^{(k)}_f (\cdot)$ and $\delta^{(k)}_p (\cdot)$ for the change in the quantity $(\cdot)$ during the flow and poromechanics solves respectively over the $(k+1)^{th}$ coupling iteration and $\delta^{(k)} (\cdot)$ for the change in the quantity $(\cdot)$ over the $(k+1)^{th}$ coupling iteration at any time level such that
\begin{align*}
\delta^{(k)} (\cdot)\equiv (\cdot)^{(k+1)}-(\cdot)^{k}=\delta^{(k)}_f (\cdot)+\delta^{(k)}_p (\cdot)
\end{align*}
With that in mind, \eqref{one}-\eqref{three} are written as
\begin{align}
\label{msone}
&C(\delta^{(k)}p_h,\theta_h)_{\Omega}+\Delta t(\nabla \cdot
  \delta^{(k)}\mathbf{z}_h,\theta_h)_{\Omega}+(\delta^{(k)}\phi^p,\theta_h)_{\Omega}=-\frac{C}{3}(\mathbf{B}:\delta^{(k-1)}\boldsymbol{\sigma},\theta_h)_{\Omega}\\
\label{mstwo}
&(\boldsymbol{\kappa}^{-1}\delta^{(k)} \mathbf{z}_h, \mathbf{v}_h)_{\Omega}=(\delta^{(k)} p_h,\nabla \cdot \mathbf{v}_h)_{\Omega}\\
\label{msthree}
&(\delta^{(k)} \boldsymbol{\sigma}:\boldsymbol{\epsilon}^e(\mathbf{q}_h))_{\Omega}=0
\end{align}
\begin{theorem}\label{map}
The fixed stress split scheme is a contraction map with contraction constant
  $\gamma=\frac{1}{2}$ given by 
\begin{align}
\nonumber
&\Vert
  \mathbf{B}:\delta^{(k)}\boldsymbol{\sigma}\Vert_{\Omega}^2+\overbrace{\Vert\delta^{(k)}p_h\Vert_{\Omega}^2}^{\geq
  0}+\overbrace{\frac{3}{C}\Delta t
\Vert\boldsymbol{\kappa}^{-1/2}\delta^{(k)} \mathbf{z}_h
  \Vert^2_{\Omega}}^{\geq 0}+\overbrace{
  \frac{3}{C}(\delta^{(k)} \boldsymbol{\sigma}:\mathbb{D}^{-1}\delta^{(k)}\boldsymbol{\sigma})_{\Omega}}^{\geq 0}+\overbrace{\frac{3}{2C^2}\Vert \delta^{(k)}\zeta\Vert_{\Omega}^2}^{\geq 0}\\
\nonumber   
  &-\overbrace{\frac{3}{C^2}\bigg[\frac{7}{2}\Vert
  \delta^{(k)}_p\zeta-\delta^{(k)}_p\phi^p\Vert_{\Omega}^2+\frac{1}{2}\Vert
  \delta^{(k)}\phi^p\Vert_{\Omega}^2+\frac{C}{3}(\mathbf{B}:\delta^{(k)}\boldsymbol{\sigma},\delta^{(k)}\phi^p)_{\Omega}\bigg]}^{\mathrm{\rightarrow
  0}}\leq \gamma\Vert \mathbf{B}:\delta^{(k-1)}\boldsymbol{\sigma}\Vert_{\Omega}^2
\end{align}
  where the term
  \begin{align*}
  \frac{3}{C^2}\bigg[\frac{7}{2}\Vert
  \delta^{(k)}_p\zeta-\delta^{(k)}_p\phi^p\Vert_{\Omega}^2+\frac{1}{2}\Vert
  \delta^{(k)}\phi^p\Vert_{\Omega}^2+\frac{C}{3}(\mathbf{B}:\delta^{(k)}\boldsymbol{\sigma},\delta^{(k)}\phi^p)_{\Omega}\bigg]\end{align*}
  is driven to a small value by the convergence criterion. The reason behind driving this term to a small value is to minimize the influence of the negativity of the term on the quality of the contraction mapping. 
\end{theorem}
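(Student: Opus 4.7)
The plan is to derive an energy-type identity by testing \eqref{msone}--\eqref{msthree} with the discrete iterates themselves ($\theta_h=\delta^{(k)}p_h$, $\mathbf{v}_h=\delta^{(k)}\mathbf{z}_h$, $\mathbf{q}_h=\delta^{(k)}\mathbf{u}_h$), apply Young's inequality to the single right-hand side that carries $\delta^{(k-1)}\boldsymbol{\sigma}$ in order to expose the contraction factor $\tfrac12$, and then recast the leftover cross terms via the fluid-content identity \eqref{fluidcontent} together with its poromechanics-sub-step consequence
\begin{align*}
\mathbf{B}:\delta^{(k)}\boldsymbol{\sigma}=\tfrac{3}{C}\bigl(\delta^{(k)}_p\zeta-\delta^{(k)}_p\phi^p\bigr),
\end{align*}
which holds because the fixed-stress split freezes $\boldsymbol{\sigma}$ and $\phi^p$ during the flow solve, so $\delta^{(k)}\boldsymbol{\sigma}=\delta^{(k)}_p\boldsymbol{\sigma}$ while $\delta^{(k)}_p p_h=0$.

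First, \eqref{mstwo} tested with $\mathbf{v}_h=\delta^{(k)}\mathbf{z}_h$ identifies $(\nabla\cdot\delta^{(k)}\mathbf{z}_h,\delta^{(k)}p_h)_\Omega$ with $\Vert\boldsymbol{\kappa}^{-1/2}\delta^{(k)}\mathbf{z}_h\Vert_\Omega^2$; substituting this into \eqref{msone} with $\theta_h=\delta^{(k)}p_h$ yields the flow energy identity
\begin{align*}
C\Vert\delta^{(k)}p_h\Vert_\Omega^2+\Delta t\Vert\boldsymbol{\kappa}^{-1/2}\delta^{(k)}\mathbf{z}_h\Vert_\Omega^2+(\delta^{(k)}\phi^p,\delta^{(k)}p_h)_\Omega=-\tfrac{C}{3}(\mathbf{B}:\delta^{(k-1)}\boldsymbol{\sigma},\delta^{(k)}p_h)_\Omega.
\end{align*}
Next, \eqref{msthree} tested with $\mathbf{q}_h=\delta^{(k)}\mathbf{u}_h$, upon invoking the inverse constitutive law \eqref{invconstitutive} and the identity $\mathbb{D}^{-1}\boldsymbol{\alpha}=\tfrac{C}{3}\mathbf{B}$, delivers the poromechanics energy identity
\begin{align*}
(\delta^{(k)}\boldsymbol{\sigma}:\mathbb{D}^{-1}\delta^{(k)}\boldsymbol{\sigma})_\Omega+\tfrac{C}{3}(\mathbf{B}:\delta^{(k)}\boldsymbol{\sigma},\delta^{(k)}p_h)_\Omega=0.
\end{align*}
Scaling the flow identity by $3/C$ and adding $3/C$ times the poromechanics identity collects all the non-negative pieces demanded by the theorem on the left and leaves the single inner product $-(\mathbf{B}:\delta^{(k-1)}\boldsymbol{\sigma},\delta^{(k)}p_h)_\Omega$ on the right.

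I then apply Young's inequality with unit weight to this driving term to produce $\tfrac12\Vert\mathbf{B}:\delta^{(k-1)}\boldsymbol{\sigma}\Vert_\Omega^2+\tfrac12\Vert\delta^{(k)}p_h\Vert_\Omega^2$, and expand the square
$\tfrac{9}{C^2}\Vert\delta^{(k)}\zeta\Vert_\Omega^2=\Vert 3\delta^{(k)}p_h+\mathbf{B}:\delta^{(k)}\boldsymbol{\sigma}+\tfrac{3}{C}\delta^{(k)}\phi^p\Vert_\Omega^2$
derived from \eqref{fluidcontent}, solving the resulting polarisation identity for precisely the cross combination $(\mathbf{B}:\delta^{(k)}\boldsymbol{\sigma},\delta^{(k)}p_h)_\Omega+\tfrac{3}{C}(\delta^{(k)}\phi^p,\delta^{(k)}p_h)_\Omega$ that appears on the left. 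This substitution introduces $+\tfrac{3}{2C^2}\Vert\delta^{(k)}\zeta\Vert_\Omega^2$ together with a residue $-\tfrac{1}{6}\Vert\mathbf{B}:\delta^{(k)}\boldsymbol{\sigma}\Vert_\Omega^2-\tfrac{3}{2C^2}\Vert\delta^{(k)}\phi^p\Vert_\Omega^2-\tfrac{1}{C}(\mathbf{B}:\delta^{(k)}\boldsymbol{\sigma},\delta^{(k)}\phi^p)_\Omega$. Writing $-\tfrac16\Vert\mathbf{B}:\delta^{(k)}\boldsymbol{\sigma}\Vert^2=\Vert\mathbf{B}:\delta^{(k)}\boldsymbol{\sigma}\Vert^2-\tfrac76\Vert\mathbf{B}:\delta^{(k)}\boldsymbol{\sigma}\Vert^2$ and converting the coefficient-$\tfrac76$ copy via $\Vert\mathbf{B}:\delta^{(k)}\boldsymbol{\sigma}\Vert_\Omega^2=\tfrac{9}{C^2}\Vert\delta^{(k)}_p\zeta-\delta^{(k)}_p\phi^p\Vert_\Omega^2$ produces $\tfrac{21}{2C^2}\Vert\delta^{(k)}_p\zeta-\delta^{(k)}_p\phi^p\Vert_\Omega^2$, which together with the other two residue terms assembles the bracket in the statement verbatim.

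The main obstacle is the sign bookkeeping at this last step: the residue of coefficient $\tfrac{3}{C^2}$ gathers exactly the plastic-deformation contributions (the $\phi^p$ cross product and the poromechanics-sub-step norm $\Vert\delta^{(k)}_p\zeta-\delta^{(k)}_p\phi^p\Vert^2$) which carry unfavourable signs and cannot be absorbed into the non-negative left-hand side, so genuine contraction is recovered only when this bracket is small. Articulating this necessity, rather than attempting to bound the bracket absolutely, is the conceptual difference between the present plastic analysis and the purely poroelastic contraction mapping of \cite{danacg}, and is what the convergence criterion is designed to enforce.
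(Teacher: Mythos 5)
Your proposal is correct and follows essentially the same route as the paper's proof: the same energy tests of \eqref{msone}--\eqref{msthree}, elimination of the cross terms through the squared fluid-content identity from \eqref{fluidcontent}, the fixed-stress consequence $\mathbf{B}:\delta^{(k)}\boldsymbol{\sigma}=\tfrac{3}{C}\bigl(\delta^{(k)}_p\zeta-\delta^{(k)}_p\phi^p\bigr)$ (the paper's \eqref{chahiye}--\eqref{convergence}), the add-and-subtract of $\Vert\mathbf{B}:\delta^{(k)}\boldsymbol{\sigma}\Vert_{\Omega}^2$, and Young's inequality with unit weight giving the factor $\tfrac12$. The only differences are cosmetic (you scale by $3/C$ at the outset and invoke Young before the fluid-content substitution, whereas the paper multiplies by $C$ and applies Young last), and your coefficient bookkeeping reproduces the stated bracket exactly.
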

\begin{proof}
$\bullet$ \textbf{Step 1: Flow equations}\\ 
Testing \eqref{msone} with $\theta_h\equiv \delta^{(k)} p_h$ and \eqref{mstwo} with $\mathbf{v}_h\equiv \delta^{(k)} \mathbf{z}_h$, we get
\begin{align}
\label{msfour}
&C\Vert\delta^{(k)}p_h\Vert_{\Omega}^2+\Delta t(\nabla \cdot \delta^{(k)}\mathbf{z}_h,\delta^{(k)} p_h)_{\Omega}+(\delta^{(k)}\phi^p,\delta^{(k)} p_h)_{\Omega}=-\frac{C}{3}(\mathbf{B}:\delta^{(k-1)}\boldsymbol{\sigma},\delta^{(k)} p_h)_{\Omega}\\
\label{msfive}
&\Vert\boldsymbol{\kappa}^{-1/2}\delta^{(k)} \mathbf{z}_h \Vert^2_{\Omega}=(\delta^{(k)} p_h,\nabla \cdot \delta^{(k)} \mathbf{z}_h)_{\Omega}
\end{align}
From \eqref{msfour} and \eqref{msfive}, we get
\begin{align}
\label{mssix}
&C\Vert\delta^{(k)}p_h\Vert_{\Omega}^2+\Delta t
\Vert\boldsymbol{\kappa}^{-1/2}\delta^{(k)} \mathbf{z}_h \Vert^2_{\Omega}+(\delta^{(k)}\phi^p,\delta^{(k)} p_h)_{\Omega}=-\frac{C}{3}(\mathbf{B}:\delta^{(k-1)}\boldsymbol{\sigma},\delta^{(k)} p_h)_{\Omega}
\end{align}
$\bullet$ \textbf{Step 2: Poromechanics equations}\\
Testing \eqref{msthree} with $\mathbf{q}_h\equiv \delta^{(k)}\mathbf{u}_h$, we get
\begin{align}
\label{eq1}
&(\delta^{(k)} \boldsymbol{\sigma}:\delta^{(k)}\boldsymbol{\epsilon}^e)_{\Omega}=0
\end{align}
We now invoke \eqref{invconstitutive} to arrive at $\delta^{(k)}\boldsymbol{\epsilon}^e=\mathbb{D}^{-1}\delta^{(k)}\boldsymbol{\sigma}+\frac{C}{3}\mathbf{B}\delta^{(k)}p_h$. Substituting in \eqref{eq1}, we get
\begin{align}
\label{msseven}
&(\delta^{(k)} \boldsymbol{\sigma}:\mathbb{D}^{-1}\delta^{(k)}\boldsymbol{\sigma})_{\Omega}+\frac{C}{3}(\mathbf{B}:\delta^{(k)} \boldsymbol{\sigma},\delta^{(k)}p_h)_{\Omega}=0
\end{align}
$\bullet$ \textbf{Step 3: Combining flow and poromechanics equations}\\
Adding \eqref{mssix} and \eqref{msseven}, we get
\begin{align}
\nonumber
&C\Vert\delta^{(k)}p_h\Vert_{\Omega}^2+\Delta t
\Vert\boldsymbol{\kappa}^{-1/2}\delta^{(k)} \mathbf{z}_h \Vert^2_{\Omega}+(\delta^{(k)}\phi^p,\delta^{(k)} p_h)_{\Omega}+(\delta^{(k)}
  \boldsymbol{\sigma}:\mathbb{D}^{-1}\delta^{(k)}\boldsymbol{\sigma})_{\Omega}+\frac{C}{3}(\mathbf{B}:\delta^{(k)}
  \boldsymbol{\sigma},\delta^{(k)}p_h)_{\Omega}\\
  \label{use1}
  &=-\frac{C}{3}(\mathbf{B}:\delta^{(k-1)}\boldsymbol{\sigma},\delta^{(k)} p_h)_{\Omega}
\end{align}
$\bullet$ \textbf{Step 4: Variation in fluid content}\\
In lieu of \eqref{fluidcontent}, we write
\begin{align}
\label{fluidcontentfull}
&\delta^{(k)}\zeta=C\delta^{(k)}p_h+\frac{C}{3}\mathbf{B}:\delta^{(k)}\boldsymbol{\sigma}+\delta^{(k)}\phi^p
\end{align}
which can be written as
\begin{align*}
  \Vert \delta^{(k)}\zeta \Vert_{\Omega}^2&=C^2\Vert
  \delta^{(k)}p_h\Vert_{\Omega}^2+\frac{C^2}{9}\Vert
  \mathbf{B}:\delta^{(k)}\boldsymbol{\sigma}\Vert_{\Omega}^2+\Vert
  \delta^{(k)}\phi^p\Vert_{\Omega}^2+\frac{2C^2}{3}(\delta^{(k)}p_h,\mathbf{B}:\delta^{(k)}\boldsymbol{\sigma})_{\Omega}\\
  &+\frac{2C}{3}(\mathbf{B}:\delta^{(k)}\boldsymbol{\sigma},\delta^{(k)}\phi^p)_{\Omega}+2C(\delta^{(k)}p_h,\delta^{(k)}\phi^p)_{\Omega}
\end{align*}
Dividing throughout by $2C$, we get
\begin{align}
\nonumber
&\frac{1}{2C}\Vert \delta^{(k)}\zeta\Vert_{\Omega}^2-\frac{C}{2}\Vert\delta^{(k)}p_h\Vert_{\Omega}^2-\frac{C}{18}\Vert \mathbf{B}:\delta^{(k)}\boldsymbol{\sigma}\Vert_{\Omega}^2-\frac{1}{2C}\Vert \delta^{(k)}\phi^p\Vert_{\Omega}^2\\
\label{daal}
&-\frac{1}{3}(\mathbf{B}:\delta^{(k)}\boldsymbol{\sigma},\delta^{(k)}\phi^p)_{\Omega}=\frac{C}{3}(\mathbf{B}:\delta^{(k)} \boldsymbol{\sigma},\delta^{(k)}p_h)_{\Omega}+(\delta^{(k)}\phi^p,\delta^{(k)} p_h)_{\Omega}
\end{align}
From \eqref{use1} and \eqref{daal}, we get
\begin{align}
\nonumber
&C\Vert\delta^{(k)}p_h\Vert_{\Omega}^2+\Delta t
\Vert\boldsymbol{\kappa}^{-1/2}\delta^{(k)} \mathbf{z}_h \Vert^2_{\Omega}+(\delta^{(k)} \boldsymbol{\sigma}:\mathbb{D}^{-1}\delta^{(k)}\boldsymbol{\sigma})_{\Omega}+\frac{1}{2C}\Vert \delta^{(k)}\zeta\Vert_{\Omega}^2\\
\nonumber
&-\frac{C}{2}\Vert\delta^{(k)}p_h\Vert_{\Omega}^2-\frac{C}{18}\Vert \mathbf{B}:\delta^{(k)}\boldsymbol{\sigma}\Vert_{\Omega}^2-\frac{1}{2C}\Vert \delta^{(k)}\phi^p\Vert_{\Omega}^2-\frac{1}{3}(\mathbf{B}:\delta^{(k)}\boldsymbol{\sigma},\delta^{(k)}\phi^p)_{\Omega}\\
\label{use2}
&=-\frac{C}{3}(\mathbf{B}:\delta^{(k-1)}\boldsymbol{\sigma},\delta^{(k)} p_h)_{\Omega}
\end{align}
Adding and subtracting $\frac{C}{3}\Vert \mathbf{B}:\delta^{(k)}\boldsymbol{\sigma}\Vert_{\Omega}^2$ to the LHS of \eqref{use2} results in
\begin{align}
\nonumber
&\frac{C}{3}\Vert \mathbf{B}:\delta^{(k)}\boldsymbol{\sigma}\Vert_{\Omega}^2+\frac{C}{2}\Vert\delta^{(k)}p_h\Vert_{\Omega}^2+\Delta t
\Vert\boldsymbol{\kappa}^{-1/2}\delta^{(k)} \mathbf{z}_h \Vert^2_{\Omega}+(\delta^{(k)} \boldsymbol{\sigma}:\mathbb{D}^{-1}\delta^{(k)}\boldsymbol{\sigma})_{\Omega}\\
\nonumber
&+\frac{1}{2C}\Vert \delta^{(k)}\zeta\Vert_{\Omega}^2-\frac{7C}{18}\Vert \mathbf{B}:\delta^{(k)}\boldsymbol{\sigma}\Vert_{\Omega}^2-\frac{1}{2C}\Vert \delta^{(k)}\phi^p\Vert_{\Omega}^2-\frac{1}{3}(\mathbf{B}:\delta^{(k)}\boldsymbol{\sigma},\delta^{(k)}\phi^p)_{\Omega}\\
\nonumber
&=-\frac{C}{3}(\mathbf{B}:\delta^{(k-1)}\boldsymbol{\sigma},\delta^{(k)} p_h)_{\Omega}
\end{align}
Multiplying throughout by $C$ results in
\begin{align}
\nonumber
&\frac{C^2}{3}\Vert \mathbf{B}:\delta^{(k)}\boldsymbol{\sigma}\Vert_{\Omega}^2+\frac{C^2}{2}\Vert\delta^{(k)}p_h\Vert_{\Omega}^2+C\Delta t
\Vert\boldsymbol{\kappa}^{-1/2}\delta^{(k)} \mathbf{z}_h \Vert^2_{\Omega}+C(\delta^{(k)} \boldsymbol{\sigma}:\mathbb{D}^{-1}\delta^{(k)}\boldsymbol{\sigma})_{\Omega}\\
\nonumber
&+\frac{1}{2}\Vert \delta^{(k)}\zeta\Vert_{\Omega}^2-\frac{7C^2}{18}\Vert \mathbf{B}:\delta^{(k)}\boldsymbol{\sigma}\Vert_{\Omega}^2-\frac{1}{2}\Vert \delta^{(k)}\phi^p\Vert_{\Omega}^2-\frac{C}{3}(\mathbf{B}:\delta^{(k)}\boldsymbol{\sigma},\delta^{(k)}\phi^p)_{\Omega}\\
\label{use3}
&=-\frac{C^2}{3}(\mathbf{B}:\delta^{(k-1)}\boldsymbol{\sigma},\delta^{(k)} p_h)_{\Omega}
\end{align}
$\bullet$ \textbf{Step 5: Invoking the fixed stress constraint}\\
Using \eqref{fluidcontent} and the fixed stress constraint during the flow solve, we get
\begin{align*}
&\delta^{(k)}_f\zeta=C\delta_f^{(k)}p_h+\frac{C}{3}\mathbf{B}:\cancelto{\mathbf{0}}{\delta^{(k)}_f\boldsymbol{\sigma}}+\delta_f^{(k)}\phi^p
\end{align*}
Further, since the pore pressure is frozen during the poromechanical solve, we have $\delta^{(k)}_fp_h=\delta^{(k)}p_h$. As a result, we can write
\begin{align}
\label{fluidcontentflow}
&\delta^{(k)}_f\zeta=C\delta^{(k)}p_h+\delta_f^{(k)}\phi^p
\end{align}
Subtracting \eqref{fluidcontentflow} from \eqref{fluidcontentfull}, we can write
\begin{align}
\label{chahiye}
&\delta^{(k)}_p\zeta=\frac{C}{3}\mathbf{B}:\delta^{(k)}\boldsymbol{\sigma}+\delta^{(k)}_p\phi^p
\end{align}
which implies that
\begin{align}
\label{convergence}
&\Vert \delta^{(k)}_p\zeta-\delta^{(k)}_p\phi^p\Vert_{\Omega}^2=\frac{C^2}{9}\Vert \mathbf{B}:\delta^{(k)}\boldsymbol{\sigma}\Vert_{\Omega}^2
\end{align}
Noting \eqref{convergence}, we can write \eqref{use3} as
\begin{align}
\nonumber
&\frac{C^2}{3}\Vert \mathbf{B}:\delta^{(k)}\boldsymbol{\sigma}\Vert_{\Omega}^2+\frac{C^2}{2}\Vert\delta^{(k)}p_h\Vert_{\Omega}^2+C\Delta t\Vert\boldsymbol{\kappa}^{-1/2}\delta^{(k)} \mathbf{z}_h \Vert^2_{\Omega}
+C(\delta^{(k)} \boldsymbol{\sigma}:\mathbb{D}^{-1}\delta^{(k)}\boldsymbol{\sigma})_{\Omega}\\
\nonumber
  +&\bigg[\frac{1}{2}\Vert \delta^{(k)}\zeta\Vert_{\Omega}^2-\frac{7}{2}\Vert \delta^{(k)}_p\zeta-\delta^{(k)}_p\phi^p\Vert_{\Omega}^2-\frac{1}{2}\Vert \delta^{(k)}\phi^p\Vert_{\Omega}^2-\frac{C}{3}(\mathbf{B}:\delta^{(k)}\boldsymbol{\sigma},\delta^{(k)}\phi^p)_{\Omega}\bigg]\\
\label{mseight}
&=-\frac{C^2}{3}(\mathbf{B}:\delta^{(k-1)}\boldsymbol{\sigma},\delta^{(k)} p_h)_{\Omega}
\end{align}
$\bullet$ \textbf{Step 6: Invoking the Young's inequality}\\
We invoke the Young's inequality (see \cite{steele})
for the RHS of \eqref{mseight} as follows
\begin{align}
\label{young}
&-\frac{C^2}{3}(\mathbf{B}:\delta^{(k-1)}\boldsymbol{\sigma},\delta^{(k)} p_h)_{\Omega}\leq \frac{C^2}{3}\bigg[\frac{1}{2} \Vert \mathbf{B}:\delta^{(k-1)}\boldsymbol{\sigma}\Vert_{\Omega}^2+\frac{1}{2}\Vert\delta^{(k)} p_h \Vert_{\Omega}^2\bigg]
\end{align}
In lieu of \eqref{young}, we write \eqref{mseight} as
\begin{align}
\nonumber
&\frac{C^2}{3}\Vert \mathbf{B}:\delta^{(k)}\boldsymbol{\sigma}\Vert_{\Omega}^2+\frac{C^2}{3}\Vert\delta^{(k)}p_h\Vert_{\Omega}^2+C\Delta t
\Vert\boldsymbol{\kappa}^{-1/2}\delta^{(k)} \mathbf{z}_h \Vert^2_{\Omega}+C(\delta^{(k)} \boldsymbol{\sigma}:\mathbb{D}^{-1}\delta^{(k)}\boldsymbol{\sigma})_{\Omega}\\
\nonumber
  +&\bigg[\frac{1}{2}\Vert
  \delta^{(k)}\zeta\Vert_{\Omega}^2-\frac{7}{2}\Vert \delta^{(k)}_p\zeta-\delta^{(k)}_p\phi^p\Vert_{\Omega}^2-\frac{1}{2}\Vert \delta^{(k)}\phi^p\Vert_{\Omega}^2-\frac{C}{3}(\mathbf{B}:\delta^{(k)}\boldsymbol{\sigma},\delta^{(k)}\phi^p)_{\Omega}\bigg]\\
\nonumber
&\leq \frac{C^2}{6}\Vert \mathbf{B}:\delta^{(k-1)}\boldsymbol{\sigma}\Vert_{\Omega}^2
\end{align}
which can be written as
\begin{align}
\nonumber
&\Vert
  \mathbf{B}:\delta^{(k)}\boldsymbol{\sigma}\Vert_{\Omega}^2+\overbrace{\Vert\delta^{(k)}p_h\Vert_{\Omega}^2}^{\geq
  0}+\overbrace{\frac{3}{C}\Delta t
\Vert\boldsymbol{\kappa}^{-1/2}\delta^{(k)} \mathbf{z}_h
  \Vert^2_{\Omega}}^{\geq 0}+\overbrace{
  \frac{3}{C}(\delta^{(k)} \boldsymbol{\sigma}:\mathbb{D}^{-1}\delta^{(k)}\boldsymbol{\sigma})_{\Omega}}^{\geq 0}+\overbrace{\frac{3}{2C^2}\Vert \delta^{(k)}\zeta\Vert_{\Omega}^2}^{\geq 0}\\
\nonumber   
  &-\overbrace{\frac{3}{C^2}\bigg[\frac{7}{2}\Vert
  \delta^{(k)}_p\zeta-\delta^{(k)}_p\phi^p\Vert_{\Omega}^2+\frac{1}{2}\Vert
  \delta^{(k)}\phi^p\Vert_{\Omega}^2+\frac{C}{3}(\mathbf{B}:\delta^{(k)}\boldsymbol{\sigma},\delta^{(k)}\phi^p)_{\Omega}\bigg]}^{\mathrm{\rightarrow
  0}}\leq \frac{1}{2}\Vert \mathbf{B}:\delta^{(k-1)}\boldsymbol{\sigma}\Vert_{\Omega}^2
\end{align}
\end{proof}
\subsection{Derivation of criterion}
We desire to drive the following quantity to zero
\begin{align}
\label{design}
\frac{3}{C^2}\bigg[\frac{7}{2}\Vert
  \delta^{(k)}_p\zeta-\delta^{(k)}_p\phi^p\Vert_{\Omega}^2+\frac{1}{2}\Vert
  \delta^{(k)}\phi^p\Vert_{\Omega}^2+\frac{C}{3}(\mathbf{B}:\delta^{(k)}\boldsymbol{\sigma},\delta^{(k)}\phi^p)_{\Omega}\bigg]\end{align}
Noting \eqref{chahiye}, we can write
\begin{align}
\label{abhi}
\frac{C}{3}(\mathbf{B}:\delta^{(k)}\boldsymbol{\sigma},\delta^{(k)}\phi^p)_{\Omega}&=(\delta_p^{(k)}\zeta-\delta^{(k)}_p\phi^p,\delta^{(k)}\phi^p)_{\Omega}
\end{align}
Using \eqref{abhi}, we can write \eqref{design} as
\begin{align*}
  \frac{3}{C^2}\bigg[\frac{7}{2}\Vert
  \delta^{(k)}_p\zeta-\delta^{(k)}_p\phi^p\Vert_{\Omega}^2+\frac{1}{2}\Vert
  \delta^{(k)}\phi^p\Vert_{\Omega}^2+(\delta_p^{(k)}\zeta-\delta^{(k)}_p\phi^p,\delta^{(k)}\phi^p)_{\Omega}\bigg]
\end{align*}
which can also be written as
\begin{align*}
  \frac{3}{C^2}\bigg[\frac{7}{2}\Vert
  \delta^{(k)}_p\zeta-\delta^{(k)}_p\phi^p\Vert_{\Omega}^2+\frac{1}{2}\Vert
  \delta^{(k)}_p\phi^p\Vert_{\Omega}^2+(\delta_p^{(k)}\zeta-\delta^{(k)}_p\phi^p,\delta^{(k)}_p\phi^p)_{\Omega}\bigg]
\end{align*}
which, given that $\delta^{(k)}_f\phi^p$ (see Appendix \ref{understand}), can also be written as
\begin{align*}
  \frac{3}{2C^2}\bigg[6\Vert \delta^{(k)}_p\zeta-\delta^{(k)}_p\phi^p\Vert_{\Omega}^2+\Vert\delta^{(k)}_p\zeta \Vert_{\Omega}^2\bigg]
\end{align*}
which, in lieu of \eqref{invconstitutive} and \eqref{pporosity}, can also be written as
\begin{align*}
  \frac{3}{2C^2}\bigg[6\Vert \boldsymbol{\alpha}:\delta^{(k)}_p
  \boldsymbol{\epsilon}^e\Vert_{\Omega}^2+\Vert \boldsymbol{\alpha}:\delta^{(k)}_p
  \boldsymbol{\epsilon}^e+\boldsymbol{\beta}:\delta^{(k)}_p
  \boldsymbol{\epsilon}^p \Vert_{\Omega}^2\bigg]
\end{align*}
As a result, we pose the convergence criterion as
\begin{align}
\label{ccriterion}
  &6\Vert \boldsymbol{\alpha}:\delta^{(k)}_p
  \boldsymbol{\epsilon}^e\Vert_{\Omega}^2+\Vert \boldsymbol{\alpha}:\delta^{(k)}_p
  \boldsymbol{\epsilon}^e+\boldsymbol{\beta}:\delta^{(k)}_p
  \boldsymbol{\epsilon}^p \Vert_{\Omega}^2\leq TOL
\end{align}
where $TOL>0$ is a pre-specified tolerance and represents a small value.
\section{Results and conclusions}
We derived the convergence criterion for a staggered solution algorithm for anisotropic poroelastoplasticity coupled with single phase flow. We applied the principle of contraction mapping which states that the difference of iterates monotonically reduces with coupling iterations for convergence to be achieved at each time step. The convergence criterion is designed such that the statement of contraction mapping holds true.
\appendix
\section{Discrete variational statements}\label{discretestate}
In lieu of \eqref{fluidcontent}, we write mass conservation equation as
\begin{align}
\nonumber
&\frac{\partial}{\partial t}(Cp+\frac{C}{3}\mathbf{B}:\boldsymbol{\sigma}+\phi^p)+\nabla \cdot \mathbf{z}=q\\
\label{ek}
&C\frac{\partial p}{\partial t} +\nabla \cdot \mathbf{z}=q-\frac{C}{3}\mathbf{B}:\frac{\partial \boldsymbol{\sigma}}{\partial t}-\frac{\partial \phi^p}{\partial t}
\end{align}
The discrete in time form of \eqref{ek} in the $(n+1)^{th}$ time step is written as
\begin{align*}
&C\frac{1}{\Delta t}(p^{k,n+1}-p^n) +\nabla \cdot \mathbf{z}^{k,n+1}=q^{n+1}-\frac{1}{\Delta t}\frac{C}{3}\mathbf{B}:(\boldsymbol{\sigma}^{k,n+1}-\boldsymbol{\sigma}^n)-\frac{1}{\Delta t}(\phi^{p^{k,n+1}}-\phi^{p^n})
\end{align*}
where $\Delta t$ is the time step. The fixed stress split constraint implies that $\boldsymbol{\sigma}^{k,n+1}$ gets replaced by $\boldsymbol{\sigma}^{k-1,n+1}$ as $\boldsymbol{\sigma}$ is fixed during the flow solve. The modified equation is written as
\begin{align}
\nonumber
&C(p^{k,n+1}-p^n)+\Delta t\nabla \cdot \mathbf{z}^{k,n+1}=\Delta t q^{n+1}-\frac{C}{3}\mathbf{B}:(\boldsymbol{\sigma}^{k-1,n+1}-\boldsymbol{\sigma}^n)-(\phi^{p^{k,n+1}}-\phi^{p^n})
\end{align}
As a result, the discrete weak form of mass conservation is given by
\begin{align}
\nonumber
&C(p_h^{k,n+1}-p_h^n,\theta_h)_{\Omega}+\Delta t(\nabla \cdot \mathbf{z}_h^{k,n+1},\theta_h)_{\Omega}+(\phi^{p^{k,n+1}}-\phi^{p^n},\theta_h)_{\Omega}\\
\nonumber
&=\Delta t(q^{n+1},\theta_h)_{\Omega}-\frac{C}{3}(\mathbf{B}:(\boldsymbol{\sigma}^{k-1,n+1}-\boldsymbol{\sigma}^n),\theta_h)_{\Omega}
\end{align}
The weak form of the linear momentum balance is given by
\begin{align}
\label{app1}
(\nabla \cdot \boldsymbol{\sigma},\mathbf{q})_{\Omega}+(\mathbf{f}\cdot \mathbf{q})_{\Omega}=0\qquad (\forall\,\,\mathbf{q}\in \mathbf{U}(\Omega))
\end{align}
where $\mathbf{U}(\Omega)$ is given by
\begin{align*}
\mathbf{U}(\Omega)\equiv \big\{\mathbf{q}=(u,v,w):u,v,w\in H^1(\Omega),\mathbf{q}=\mathbf{0}\,\,\mathrm{on}\,\,\Gamma_D^p\big\}
\end{align*}
where $H^m(\Omega)$ is defined, in general, for any integer $m\geq 0$ as
\begin{align*}
H^m(\Omega)\equiv\big\{w:D^{\alpha}w\in L^2(\Omega)\,\,\forall |\alpha| \leq m \big\},
\end{align*}
where the derivatives are taken in the sense of distributions and given by
\begin{align*}
D^{\alpha}w=\frac{\partial^{|\alpha|}w}{\partial x_1^{\alpha_1}..\partial x_n^{\alpha_n}},\,\,|\alpha|=\alpha_1+\cdots+\alpha_n,
\end{align*} 
We know from tensor calculus that
\begin{align}
\label{app2}
(\nabla \cdot \boldsymbol{\sigma},\mathbf{q})_{\Omega}\equiv (\nabla ,\boldsymbol{\sigma}\mathbf{q})_{\Omega}-(\boldsymbol{\sigma}:\nabla \mathbf{q})_{\Omega}
\end{align}
Further, using the divergence theorem and the symmetry of $\boldsymbol{\sigma}$, we arrive at
\begin{align}
\label{app3}
(\nabla ,\boldsymbol{\sigma}\mathbf{q})_{\Omega}\equiv (\mathbf{q},\boldsymbol{\sigma}\mathbf{n})_{\partial \Omega}
\end{align}
We decompose $\nabla \mathbf{q}$ into a symmetric part $(\nabla \mathbf{q})_{s}\equiv \frac{1}{2}\big(\nabla \mathbf{q}+(\nabla \mathbf{q})^T\big)\equiv \boldsymbol{\epsilon}^e(\mathbf{q})$ and skew-symmetric part $(\nabla \mathbf{q})_{ss}$ and note that the contraction between a symmetric and skew-symmetric tensor is zero to obtain
\begin{align}
\label{app4}
\boldsymbol{\sigma}:\nabla \mathbf{q}\equiv \boldsymbol{\sigma}:(\nabla \mathbf{q})_{s}+\cancelto{0}{\boldsymbol{\sigma}:(\nabla \mathbf{q})_{ss}}=\boldsymbol{\sigma}:\boldsymbol{\epsilon}^e(\mathbf{q})
\end{align}
From \eqref{app1}, \eqref{app2}, \eqref{app3} and \eqref{app4}, we get
\begin{align*}
&(\boldsymbol{\sigma}\mathbf{n},\mathbf{q})_{\partial \Omega} - (\boldsymbol{\sigma}:\boldsymbol{\epsilon}^e(\mathbf{q}))_{\Omega} + (\mathbf{f},\mathbf{q})_{\Omega}=0
\end{align*}
which, after invoking the traction boundary condition, results in the discrete weak form
\begin{align}
\nonumber
&(\mathbf{t}^{n+1},\mathbf{q}_h)_{\Gamma_N^p} - (\boldsymbol{\sigma}^{k,n+1}:\boldsymbol{\epsilon}^e(\mathbf{q}_h))_{\Omega} + (\mathbf{f}^{n+1},\mathbf{q}_h)_{\Omega}=0
\end{align}
\section{Change in plastic porosity over flow solve is zero}\label{understand}
\par To understand why $\delta_f^{(k)}\boldsymbol{\epsilon}^p=0$ and hence
$\delta_f^{(k)}\phi^p=0$, we present the basic algorithmic framework for the solution of elastoplastic equations: The system of equations \eqref{poromecheq} is first solved with $\gamma=0$ for a trial stress state $\boldsymbol{\sigma}^{\mathrm{trial}}$. 
\begin{itemize}
\item If $\Phi(\boldsymbol{\sigma}^{\mathrm{trial}})\leq 0$, then we proceed with $\boldsymbol{\sigma}=\boldsymbol{\sigma}^{\mathrm{trial}}$.
\item If $\Phi(\boldsymbol{\sigma}^{\mathrm{trial}})>0$, then the system \eqref{poromecheq} is solved with $\gamma>0$ thereby lending us plastic strain. The procedure to solve \eqref{poromecheq} with $\gamma>0$ is referred to as a return mapping algorithm~\cite{settari,liu,simo,ali,salari,borja,simo1,simo2,neto}. The solution $\boldsymbol{\sigma}^{\mathrm{return\,\,map}}$ of the return mapping algorithm is such that $\Phi(\boldsymbol{\sigma}^{\mathrm{return\,\,map}})=0$ and we proceed with $\boldsymbol{\sigma}=\boldsymbol{\sigma}^{\mathrm{return\,\,map}}$.
\end{itemize}
In summary, the solution $\boldsymbol{\sigma}$ is such that $\Phi(\boldsymbol{\sigma})\leq 0$. During the subsequent flow solve, since the stress tensor is fixed, the value of the yield function does not change i.e. $\Phi(\boldsymbol{\sigma})\leq 0$ still. This implies that $\gamma=0$ during the flow solve and the porous solid does not accumulate any plastic strain during the flow solve.

\vskip2mm

\bibliographystyle{unsrt}    
\bibliography{diss}

\end{document}